\newtheorem{theoremcounter}{Theorem Counter}[section]
\theoremstyle{plain}
\newtheorem{theorem}[theoremcounter]{Theorem}
\newtheorem{lemma}[theoremcounter]{Lemma}
\theoremstyle{remark}
\newtheorem{remark}[theoremcounter]{Remark}
\newcommand{\bn}{\boldsymbol{n}}
\newcommand{\cA}{\mathcal{A}}
\newcommand{\bk}{\boldsymbol{k}}
\newcommand{\bl}{\boldsymbol{l}}
\newcommand{\bm}{\boldsymbol{m}}
\newcommand{\bp}{\boldsymbol{p}}
\newcommand{\PP}{\mathcal{P}}
\newcommand{\RR}{\mathbb{R}}
\newcommand{\ZZ}{\mathbb{Z}}
\newcommand{\dep}{\mathrm{dep}}
\newcommand{\wt}{\mathrm{wt}}
\numberwithin{equation}{section}
\address{Department of Information and Computer Science, Kanazawa Institute of Technology, 7-1, Ohgigaoka, Nonoichi, Ishikawa, 921-8501, Japan}
\email{c1117334@planet.kanazawa-it.ac.jp}
\address{Department of Mathematical Sciences, Aoyama Gakuin University, 5-10-1 Fuchinobe, Chuo-ku, Sagamihara, Kanagawa, 252-5258, Japan}
\email{seki@math.aoyama.ac.jp}
\address{Komaba Toho High School, 4-5-1, Ikejiri, Setagaya-ku, Tokyo, 154-0001, Japan}
\email{taiki11235@gmail.com}
\thanks{This research was supported by JSPS KAKENHI Grant Number 21K13762.}
\keywords{Multiple harmonic sum, Multiple zeta value, Finite multiple zeta value, Duality, Iterated integral}
\title[Deriving two dualities simultaneously]{Deriving two dualities simultaneously from a family of identities for multiple harmonic sums}
\author{Takumi Maesaka, Shin-ichiro Seki, and Taiki Watanabe}
\date{}
\begin{document}

\maketitle

\begin{abstract}
We give a new expression of the multiple harmonic sum, which serves as a refinement of the iterated integral expression of the multiple zeta value, and prove it using the so-called connected sum method.
Based on this fact, by taking two kinds of limit operations, we obtain new proofs of both the duality for multiple zeta values and the duality for finite multiple zeta values.
\end{abstract}

\setcounter{section}{-1}
\section{Introduction}
The following have kept its place as fundamental in the study of multiple zeta values for about thirty years.
\begin{theorem}[Iterated integral expression of MZV; widely attributed to Drinfel'd and Kontsevich, and explicitly written in literature by Le--Murakami~\cite{LeMurakami2005}]\label{thm:integral}
Let $k_1,\dots, k_r$ be positive integers with $k_r>1$.
Set $k\coloneqq k_1+\cdots+k_r$, $\omega_0(t)\coloneqq\frac{\mathrm{d}t}{t}$, and $\omega_1(t)\coloneqq\frac{\mathrm{d}t}{1-t}$.
Then, we have
\[
\zeta(k_1,\dots,k_r)=\int_{0<t_1<\cdots<t_k<1}\omega_{a_1}(t_1)\cdots\omega_{a_k}(t_k),
\]
where, for $i\in\{1, k_1+1, k_1+k_2+1, \dots, k_1+\cdots+k_{r-1}+1\}$, $a_i=1$ and for all other $i$, $a_i=0$.
\end{theorem}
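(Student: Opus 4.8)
The plan is to compute the iterated integral explicitly, by expanding the two differential forms into power series and integrating one variable at a time, starting from the innermost variable $t_1$ and working outward. To keep everything convergent during the computation I would first replace the upper endpoint $1$ by a variable $x\in(0,1)$ and only pass to the limit $x\to 1^-$ at the very end. Writing the word associated with $(k_1,\dots,k_r)$ as $\omega_1\omega_0^{k_1-1}\omega_1\omega_0^{k_2-1}\cdots\omega_1\omega_0^{k_r-1}$, read from position $1$ (innermost) to position $k$ (outermost), the whole argument reduces to two one-variable identities for a power series $f(t)=\sum_{n\ge1}c_n t^n$:
\[
\int_0^x \omega_0(t)\,f(t) = \sum_{n\ge1}\frac{c_n}{n}\,x^n,
\qquad
\int_0^x \omega_1(t)\,f(t) = \sum_{m>n\ge1}\frac{c_n}{m}\,x^m .
\]
The first is immediate from $\int_0^x t^{n-1}\,\mathrm{d}t = x^n/n$; the second follows by expanding $\frac{1}{1-t}=\sum_{p\ge0}t^p$, collecting the coefficient of $t^N$, integrating, and reindexing $m=N+1$.

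Thus $\omega_0$ raises by one the exponent of the current outermost summation index, while $\omega_1$ opens a new, strictly larger index carrying exponent $1$. Reading the word from left to right and applying these rules repeatedly, I would prove by induction on $j$ that the truncated integral over $0<t_1<\cdots<t_{k_1+\cdots+k_j}<x$ equals
\[
\sum_{n_j>\cdots>n_1\ge1}\frac{x^{n_j}}{n_1^{k_1}\cdots n_j^{k_j}} .
\]
The base case $j=1$ uses that the initial $\omega_1$ applied to the constant $1$ produces $\sum_{n_1\ge1}x^{n_1}/n_1$, after which $\omega_0^{k_1-1}$ raises the exponent to $k_1$; in the inductive step the block $\omega_1\omega_0^{k_j-1}$ is exactly what turns the level-$(j-1)$ sum into the level-$j$ sum. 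Specializing to $j=r$ yields $\sum_{n_r>\cdots>n_1\ge1}x^{n_r}\prod_{i}n_i^{-k_i}$.

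The step that requires genuine care — and the one I expect to be the main obstacle — is the passage $x\to1^-$, together with the interchanges of summation and integration used throughout. On every compact subinterval of $(0,1)$ the power series involved converge absolutely and uniformly, so the term-by-term integrations are legitimate for each fixed $x<1$. The admissibility hypothesis $k_r>1$, equivalently $a_k=0$, is what makes the endpoint limit work: it guarantees both that the limiting series $\zeta(k_1,\dots,k_r)=\sum_{n_r>\cdots>n_1\ge1}\prod_i n_i^{-k_i}$ converges and that the iterated integral is absolutely convergent at $x=1$, since near $t=0$ the innermost form is the integrable $\omega_1$ and near $t=1$ the outermost form is the integrable $\omega_0$, so no singularity of the integrand meets an endpoint. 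I would then apply Abel's theorem to take $x\to1^-$ on the series side and continuity of the absolutely convergent integral to take $x\to1^-$ on the integral side, which completes the identification.
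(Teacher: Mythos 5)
Your argument is correct, and it is the standard analytic proof of this theorem: the two one-variable rules (that $\omega_0$ raises the exponent of the current outermost index while $\omega_1$ opens a new, strictly larger index with exponent $1$) are exactly right, your reading of the word $\omega_1\omega_0^{k_1-1}\cdots\omega_1\omega_0^{k_r-1}$ from the innermost variable $t_1$ outward matches the paper's convention, and admissibility $k_r>1$ is correctly identified with the outermost letter being $\omega_0$. The only soft spot is the endpoint analysis: the heuristic that ``no singularity of the integrand meets an endpoint'' does not by itself establish finiteness of the $k$-dimensional improper integral. This is easily repaired, however: since the integrand is nonnegative, monotone convergence gives $\lim_{x\to1^-}\int_{0<t_1<\cdots<t_k<x}=\int_{0<t_1<\cdots<t_k<1}$ with both sides allowed to be infinite, while Abel's theorem (or monotone convergence again) handles the series side; the resulting equality forces finiteness, so no a priori convergence claim about the integral is needed. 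Be aware, though, that there is no proof in the paper to compare against: Theorem~\ref{thm:integral} is quoted there as a classical result (attributed to Drinfel'd, Kontsevich, and Le--Murakami \cite{LeMurakami2005}), and the paper's own work runs in the reverse direction. Its main result, Theorem~\ref{thm:main}, is the exact finite-$N$ identity $\zeta_{<N}^{}(\bk)=\zeta_{<N}^{\flat}(\bk)$, proved by the connected-sum method with the connector $C_N(n,m)=\binom{m}{n}/\binom{N}{n}$; combined with the Riemann-sum observation in the Introduction and an error estimate in the spirit of Lemma~\ref{lem:limit}, that identity would furnish an integration-free derivation of Theorem~\ref{thm:integral}, genuinely different from yours. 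What your route buys is a self-contained classical proof by term-by-term integration of power series; what the paper's machinery buys is exact information at every finite $N$, which is precisely what makes the dualities in $\RR$ and in $\cA$ simultaneously accessible.
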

This integral is improper, but can be approximated by certain Riemann sums:
\[
\int_{0<t_1<\cdots<t_k<1}\omega_{a_1}(t_1)\cdots\omega_{a_k}(t_k)=\lim_{N\to\infty}\sum_{0<n_1<\cdots<n_k<N}\hat{\omega}^{(N)}_{a_1}(n_1)\cdots\hat{\omega}^{(N)}_{a_k}(n_k),
\]
where $\hat{\omega}^{(N)}_0(n)\coloneqq\frac{1/N}{n/N}=\frac{1}{n}$ and $\hat{\omega}^{(N)}_1(n)\coloneqq\frac{1/N}{1-n/N}=\frac{1}{N-n}$.
If one tries truncating both sides of the expression in Theorem~\ref{thm:integral}, the multiple harmonic sum and the Riemann sum do not coincide in general:
\[
\zeta^{}_{<N}(k_1,\dots,k_r)\neq\sum_{0<n_1<\cdots<n_k<N}\hat{\omega}^{(N)}_{a_1}(n_1)\cdots\hat{\omega}^{(N)}_{a_k}(n_k).
\]
This non-coincidence is not surprising, because there is no necessity for the values of two convergent sequences to coincide when they converge to the same limit.
However, surprisingly, we found that by appropriately changing the range of the summation of the Riemann sum to a range that is nontrivial yet systematic, the equality can be achieved for \emph{all} $N$. (See Theorem~\ref{thm:main}).
Unfortunately, this paper does not provide an adequate explanation for why such a phenomenon occurs, but we give a proof of the equality by finding an appropriate ``connector.''
We hope that our main result will have numerous applications; however, in this paper, we present a few applications which involve deriving the duality of multiple zeta values and the duality of finite multiple zeta values\footnote{Another application has already been obtained for the extended double shuffle relation \cite{Seki2024}.}.
It is intriguing that there exists a hidden connection between these two dualities, not merely analogous but closely related that they can be derived from a single formula.
\section{Main result}
Let $N$ be a positive integer.
We call a tuple of integers $\bk=(k_1,\dots,k_r)$ an index.
We call $\wt(\bk)\coloneqq k_1+\cdots+k_r$ its weight and $\dep(\bk)\coloneqq r$ its depth.
Then the \emph{multiple harmonic sum} $\zeta_{<N}^{}(\bk)$ is defined by
\[
\zeta_{<N}^{}(\bk)\coloneqq\sum_{0<n_1<\cdots<n_r<N}\frac{1}{n_1^{k_1}\cdots n_r^{k_r}}.
\]
Based on the multiple harmonic sum, the \emph{multiple zeta value} and the \emph{finite multiple zeta value} are defined through two different operations.
When $k_r\geq 2$, the index $\bk$ is called an admissible index.
For an admissible index $\bk$, the multiple zeta value $\zeta(\bk)$ is defined as
\[
\zeta(\bk)\coloneqq\lim_{N\to\infty}\zeta_{<N}^{}(\bk)\in\RR.
\]
For an index $\bk$, not necessarily admissible, the finite multiple zeta value $\zeta_{\cA}^{}(\bk)$ is defined as
\[
\zeta_{\cA}^{}(\bk)\coloneqq(\zeta_{<p}^{}(\bk)\bmod{p})_{p\in\PP}\in\cA,
\]
where $\cA$ is defined as
\[
\cA\coloneqq\left.\left(\prod_{p\in\PP} \ZZ/p\ZZ\right) \middle/ \left(\bigoplus_{p\in\PP} \ZZ/p\ZZ\right) \right.
\]
and $\PP$ is the set of all prime numbers.

In recent years, there are numerous studies on relations among multiple zeta values in $\RR$ and relations among finite multiple zeta values in $\cA$.
There are families of relations that, due to their similar forms, are referred to by the same name such as the ``sum formula'' in each world.
See \cite{Kaneko2019, Zhao2016} for various known relations.
The main subject of this paper is the families of relations known as the ``duality.''
Let us recall the duality in each world.

Each admissible index $\bk$ is uniquely expressed as
\[
\bk=(\{1\}^{a_1-1},b_1+1,\dots,\{1\}^{a_s-1},b_s+1),
\]
where $s$ and $a_1$, $\dots$, $a_s$, $b_1$, $\dots$, $b_s$ are positive integers.
Here, $\{1\}^a$ means $1,\dots, 1$ repeated $a$ times.
Then the dual index $\bk^{\dagger}$ of $\bk$ is defined as
\[
\bk^{\dagger}\coloneqq(\{1\}^{b_s-1},a_s+1,\dots,\{1\}^{b_1-1},a_1+1).
\]
\begin{theorem}[Duality in $\RR$]\label{thm:Rduality}
For an admissible index $\bk$, we have
\[
\zeta(\bk)=\zeta(\bk^{\dagger}).
\]
\end{theorem}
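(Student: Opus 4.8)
\section{Proof proposal}

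The plan is to deduce the duality from the iterated integral expression of Theorem~\ref{thm:integral} together with the reflection $t\mapsto 1-t$, which is the classical mechanism underlying duality. The combinatorial bookkeeping becomes transparent once the index is written in its canonical form, and the only genuinely careful point is the analytic justification of the substitution.

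First I would encode $\bk$ as a binary word. Writing $0$ and $1$ for the subscripts of $\omega_0$ and $\omega_1$, Theorem~\ref{thm:integral} assigns to $\bk$ the word $e_1\cdots e_k$ (with $k=\wt(\bk)$) in which the $j$-th part $k_j$ contributes $1\,0^{k_j-1}$. In terms of the canonical expression $\bk=(\{1\}^{a_1-1},b_1+1,\dots,\{1\}^{a_s-1},b_s+1)$ this word is simply
\[
e_1\cdots e_k = 1^{a_1}0^{b_1}1^{a_2}0^{b_2}\cdots 1^{a_s}0^{b_s},
\]
and admissibility ($k_r\ge 2$) is precisely the statement that it begins with $1$ and ends with $0$. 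Reading off the definition of $\bk^{\dagger}$, the word attached to the dual index is $1^{b_s}0^{a_s}\cdots 1^{b_1}0^{a_1}$, which is exactly $e_1\cdots e_k$ read backwards with every letter complemented. Thus the combinatorial content of duality is the identity ``reverse-and-complement equals dagger,'' which is immediate from the block form above.

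Next I would realize this symmetry analytically. Treating the integral as an honest integral of a positive function against the measure $\mathrm{d}t_1\cdots\mathrm{d}t_k$ on the simplex, I substitute $t_i = 1 - u_{k+1-i}$. This is a measure-preserving bijection of $\{0<t_1<\cdots<t_k<1\}$ onto $\{0<u_1<\cdots<u_k<1\}$, and under it the roles of $\omega_0$ and $\omega_1$ are interchanged: since $1/t_i = 1/(1-u_{k+1-i})$ and $1/(1-t_i)=1/u_{k+1-i}$, the factor indexed by $e_i$ becomes the factor indexed by $1-e_i$ in the variable $u_{k+1-i}$, with no extraneous sign. Reindexing by $j=k+1-i$ turns the integrand word $e_1\cdots e_k$ into its reverse-complement, so by Theorem~\ref{thm:integral} applied to that word (which is again admissible, as noted above) the transformed integral equals $\zeta(\bk^{\dagger})$. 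Hence $\zeta(\bk)=\zeta(\bk^{\dagger})$.

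The step demanding the most care is this change of variables, since the integrand is singular at the boundary of the simplex and the integral is improper. I would handle this by exploiting positivity: admissibility guarantees that both the original and the dual integrals converge, and on the open simplex the substitution is a genuine diffeomorphism, so monotone convergence over an exhaustion by compact subsimplices legitimizes the equality. With that in place, the remainder is the purely combinatorial verification already carried out, and the theorem follows.
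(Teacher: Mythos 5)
Your proposal is correct, but it is not the paper's proof --- it is the classical argument (conjectured by Hoffman and proved via the iterated integral expression, as the paper itself notes is ``explained in \cite{Zagier1994}''): encode $\bk$ as the word $1^{a_1}0^{b_1}\cdots 1^{a_s}0^{b_s}$, observe that $\bk^{\dagger}$ corresponds to the reverse-complement, and realize this symmetry by the change of variables $t_i=1-u_{k+1-i}$ on the open simplex, with positivity and an exhaustion argument justifying the improper integrals. The combinatorics and the analytic care are both sound. The paper, by contrast, deliberately avoids integrals: it invokes its new discretization $\zeta_{<N}^{}(\bk)=\zeta_{<N}^{\flat}(\bk)$ (Theorem~\ref{thm:main}) and performs the \emph{discrete} counterpart of your reflection, namely $n_i\mapsto N-n_{k+1-i}$, inside the finite sum $\zeta_{<N}^{\flat}(\bk^{\dagger})$. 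Both resulting sums are supported on regions $S_N(\bk)$ and $S'_N(\bk)$ containing the strict simplex $T_N(k)$, and the discrepancy --- supported on boundary strata where some inequalities are equalities --- is shown to vanish as $N\to\infty$ by the elementary estimate of Lemma~\ref{lem:limit}. What the paper's route buys is threefold: it needs no improper integrals or convergence bookkeeping at all; it yields the explicit finite-$N$ formula \eqref{eq:discrepancy} for $\zeta_{<N}^{}(\bk)-\zeta_{<N}^{}(\bk^{\dagger})$, refining the mere equality of limits; and the very same identity, read modulo $p$ instead of letting $N\to\infty$, gives the duality in $\cA$ --- the simultaneity that is the point of the paper. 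Your route buys brevity by leaning on the classical Theorem~\ref{thm:integral}, but it cannot see the finite or mod-$p$ refinements, since the reflection symmetry only becomes exact after passing to the limit.
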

This was conjectured by Hoffman~\cite{Hoffman1992} and was proved using Theorem~\ref{thm:integral} as explained in \cite{Zagier1994}.
Several proofs of this duality are known, such as \cite{Kawashima2009} (deducing from Kawashima's relation), \cite{BorweinChan2010} (induction), \cite{HiroseSato2019} (deducing from the confluence relations), and \cite{SekiYamamoto2019} (connected sum method).

For an index
\[
\bk=(\{1\}^{a_1-1},b_1+1,\dots,\{1\}^{a_{s-1}-1},b_{s-1}+1,\{1\}^{a_s-1},b_s),
\]
the Hoffman dual $\bk^{\vee}$ of $\bk$ is defined as
\[
\bk^{\vee}\coloneqq(a_1,\{1\}^{b_1-1},a_2+1,\{1\}^{b_2-1},\dots,a_s+1,\{1\}^{b_s-1}),
\]
where $s$ and $a_1$, $\dots$, $a_s$, $b_1$, $\dots$, $b_s$ are positive integers.

For two indices $\bk$ and $\bl$, the relation $\bl\preceq\bk$ means $\bl$ obtained by replacing some commas in $\bk=(k_1,\dots,k_r)$ by plus signs.
For example, $(5,1)\preceq(2,3,1)$, $(9)\preceq(1,4,2,2)$, and $\bk\preceq\bk$.
Then we define the \emph{finite multiple zeta star value} $\zeta_{\cA}^{\star}(\bk)$ as
\[
\zeta_{\cA}^{\star}(\bk)\coloneqq\sum_{\bl\preceq\bk}\zeta_{\cA}^{}(\bl).
\]
\begin{theorem}[Duality in $\cA$]\label{thm:Aduality}
For an index $\bk$, we have
\[
\zeta_{\cA}^{\star}(\bk)=-\zeta_{\cA}^{\star}(\bk^{\vee}).
\]
\end{theorem}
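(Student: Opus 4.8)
\emph{Overview of the plan.} I would reduce the identity in $\cA$ to a single congruence for each prime $p$: namely, setting $\zeta^{\star}_{<p}(\bm)\coloneqq\sum_{\bl\preceq\bm}\zeta_{<p}(\bl)$ (so that $\zeta^{\star}_{\cA}(\bk)=(\zeta^{\star}_{<p}(\bk)\bmod p)_{p\in\PP}$), the goal becomes
\[
\zeta^{\star}_{<p}(\bk)\equiv-\zeta^{\star}_{<p}(\bk^{\vee})\pmod p,
\]
and gathering these over all $p\in\PP$ gives the statement. The only substantial input is the main identity (Theorem~\ref{thm:main}), which I would use in the following shape: for every index $\bl$ of weight $k$ and every $N$,
\[
\zeta_{<N}(\bl)=\sum_{\substack{0<n_1\le\cdots\le n_k<N\\ n_{i-1}<n_i\ (a_i=1,\ n_0\coloneqq0)}}\prod_{i=1}^{k}\hat\omega^{(N)}_{a_i}(n_i),
\]
i.e.\ a chain sum whose steps are strict exactly at the block-starts of $\bl$ and weak within blocks.

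First I would put $N=p$ and reduce modulo $p$. Since $\hat\omega^{(p)}_0(n)=1/n$ while $\hat\omega^{(p)}_1(n)=1/(p-n)\equiv-1/n\pmod p$, every weight collapses to $\pm 1/n_i$ with sign $(-1)^{a_i}$, and because $\#\{i:a_i=1\}=\dep(\bl)$ this turns Theorem~\ref{thm:main} into
\[
\zeta_{<p}(\bl)\equiv(-1)^{\dep(\bl)}\sum_{\substack{0<n_1\le\cdots\le n_k<p\\ n_{i-1}<n_i\ (a_i=1)}}\frac{1}{n_1\cdots n_k}\pmod p,
\]
a chain sum with all exponents equal to $1$, strict at the block-starts of $\bl$ and weak elsewhere.

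The key step is to substitute this into $\zeta^{\star}_{<p}(\bk)=\sum_{\bl\preceq\bk}\zeta_{<p}(\bl)$. I would index the $\bl\preceq\bk$ by the set $T$ of merged commas among the $\dep(\bk)-1$ inter-block commas of $\bk$; merging a comma changes the corresponding inter-block step from strict to weak and lowers the depth by one, so
\[
\zeta^{\star}_{<p}(\bk)\equiv(-1)^{\dep(\bk)}\sum_{T}(-1)^{|T|}\Big(\text{the chain sum for }\bk\text{ with the }T\text{-steps made weak}\Big)\pmod p.
\]
Performing the inclusion--exclusion $(\text{strict})-(\text{weak})=-(\text{equality})$ independently at each of the $\dep(\bk)-1$ inter-block steps collapses this alternating sum, with a sign $(-1)^{\dep(\bk)-1}$, to the single chain in which every inter-block step is forced to be an \emph{equality} while the intra-block steps stay weak. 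Identifying the variables glued by those equalities merges each maximal run of one-entries into one higher power, and the resulting star sum is exactly $\zeta^{\star}_{<p}(\bk^{\vee})$, because forcing equalities precisely at the block junctions of $\bk$ realises the complementation of compositions defining the Hoffman dual. The two signs combine to $(-1)^{2\dep(\bk)-1}=-1$, which yields the congruence.

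The hard part will be this collapse-and-identification. I would need to check that, after the termwise use of Theorem~\ref{thm:main}, the chain sums depend on $T$ only through which inter-block steps are weakened, so that the inclusion--exclusion really factorises step by step; and that the chain with equalities at all block junctions of $\bk$ but weak steps inside blocks evaluates to the star sum of $\bk^{\vee}$. Both are bookkeeping about the $0$--$1$ word of $\bk$ and its junction structure under complementation, best organised through the word--composition dictionary. I would stress that the naive reflection $n_i\mapsto p-n_{k+1-i}$ is not enough on its own: modulo $p$ it only reproduces the reversal relation $\zeta_{\cA}(k_1,\dots,k_r)=(-1)^{\wt(\bk)}\zeta_{\cA}(k_r,\dots,k_1)$, so it is precisely the passage through the $\preceq$-expansion and the inclusion--exclusion that produces the Hoffman dual instead of the mere reversal.
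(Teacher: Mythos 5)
Your proposal is correct, and its engine is exactly the paper's: apply Theorem~\ref{thm:main} with $N=p$ and use $\frac{1}{p-n}\equiv-\frac{1}{n}\pmod{p}$ to turn $\zeta^{}_{<p}(\bl)$ into $(-1)^{\dep(\bl)}$ times the all-exponents-one chain sum over $S_p(\bl)$. Where you genuinely diverge is the finishing move. The paper never touches the star form directly: it notes that the chain sum over $S_p(\bk)$ equals $\sum_{\bk\preceq\bl}\zeta^{}_{<p}(\bl)$ (splitting each weak inequality into strict-or-equal), which proves the non-star identity $\zeta_{\cA}^{}(\bk)=(-1)^{\dep(\bk)}\sum_{\bk\preceq\bl}\zeta_{\cA}^{}(\bl)$ of Theorem~\ref{thm:nonstarduality}, and then appeals to the \emph{known} equivalence of that identity with Theorem~\ref{thm:Aduality}. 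You instead expand $\zeta^{\star}_{<p}(\bk)=\sum_{\bl\preceq\bk}\zeta^{}_{<p}(\bl)$, apply the congruence termwise, and collapse the alternating sum by inclusion--exclusion: at each of the $\dep(\bk)-1$ inter-block junctions, strict minus weak equals minus equality, and this factorizes because distinct junctions constrain disjoint pairs of adjacent variables. Your sign bookkeeping $(-1)^{\dep(\bk)}\cdot(-1)^{\dep(\bk)-1}=-1$ is right, and so is the identification of the fully glued chain with $\zeta^{\star}_{<p}(\bk^{\vee})$: forcing equalities precisely at the junctions of $\bk$ while keeping weak steps inside blocks is the word-complementation defining the Hoffman dual (e.g., for $\bk=(2,1)$ one gets $\sum_{0<n_1\le n_2<p}n_1^{-1}n_2^{-2}=\zeta^{\star}_{<p}(1,2)$). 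In effect, your inclusion--exclusion supplies a proof of the direction of the star/non-star equivalence that the paper only cites; what this buys is a self-contained derivation of Theorem~\ref{thm:Aduality} from Theorem~\ref{thm:main}, at the cost of extra combinatorial bookkeeping, whereas the paper's route is shorter and isolates the cleaner non-star statement whose mod-$p$ proof is a two-line computation. Your closing caveat is also apt: the reflection $n_i\mapsto p-n_{k+1-i}$ alone only reproduces reversal, and it is precisely the $\preceq$-expansion, not the reflection, that manufactures the Hoffman dual.
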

This relation is also known as Hoffman's duality because this was first proved by Hoffman \cite[Theorem~4.6]{Hoffman2015}.
Several proofs of this duality are known, such as \cite{Imatomi2014} (Landen type connection), \cite{Yamamoto2017} (Yamamoto's integral expression), and \cite{SekiYamamoto2020} (connected sum method).

Although the above two dualities are called by the same name, it is not clear to what extent they are mathematically similar.
For instance, there are known families of relations called the ``cyclic sum formula'' in $\RR$ (\cite{HoffmanOhno2003}) and in $\cA$ (\cite{KawasakiOyama2020}) with the same name, and as explained in \cite[\S7]{Seki2020}, the fact that both can be derived simultaneously from the same connected sum lends credibility of their way of naming.
As another example, the duality satisfied by the so-called symmetric multiple zeta values (see \cite[Definition~9.3]{Kaneko2019} for the definition) and the duality in $\cA$ have been derived simultaneously from a single formula for finite multiple harmonic $q$-series in \cite{BachmannTakeyamaTasaka2018}.
Indeed, the duality for symmetric multiple zeta values has the same form as the duality in $\cA$.
On the other hand, the forms of the dualities in $\RR$ and $\cA$ are different, and additionally, the proofs for these dualities using the connected sum method explained in \cite[\S5, \S6]{Seki2020} employ different connected sums, giving rise to the sense that, although they share the same name, they may in fact be different phenomena.

In this context, several years ago, Minoru Hirose asked the second author the following question in a private communication: ``Is it possible to derive the duality in $\RR$ and the duality in $\cA$ simultaneously?''
This paper gives an \emph{affirmative} answer to this question.

In order to achieve this, we state the new formula mentioned in Introduction.
First, let us prepare some notation.
For a positive integer $n$, $[n]\coloneqq\{1,2,\dots,n\}$.
For a positive integer $k$ and an index $\bn=(n_1,\dots,n_k)$ with $n_1<N$, we define $P_k^{(N)}(\bn)\in\mathbb{Q}$ as
\[
P_k^{(N)}(\bn)\coloneqq\frac{1}{(N-n_1)n_2\cdots n_k}.
\]
When $k=1$, $P_1^{(N)}(n_1)$ is not $1/n_1$ but $1/(N-n_1)$.
For an index $\bk=(k_1,\dots, k_r)$ with $\wt(\bk)=k$, we define sets $J(\bk)$ and $S_N(\bk)$ as
\[
J(\bk)\coloneqq\{1,k_1+1,k_1+k_2+1,\dots,k_1+\cdots+k_{r-1}+1, k+1\}
\]
and
\[
S_N(\bk)\coloneqq\left\{(n_1,\dots,n_k)\in\ZZ^k \ \middle| \ \begin{array}{cc} n_{i-1}<n_i  & \text{ if } i\in J(\bk), \\ n_{i-1}\leq n_i & \text{ if } i\in [k]\setminus J(\bk), \\ \text{where }n_0=0, & n_{k+1}=N \end{array}\right\},
\]
respectively.
When indices are joined by commas, it means the concatenation, and it is understood that parentheses are appropriately removed.
For example, $((2,3),(3,1))=(2,3,3,1)$ and $((3,5),1)=(3,5,1)$.
Finally, we define a kind of multiple harmonic sum in the form of a Riemann sum, denoted as $\zeta_{<N}^{\flat}(\bk)$, as follows:
\[
\zeta_{<N}^{\flat}(\bk)\coloneqq\sum_{\substack{(\bn_1,\dots,\bn_r)\in S_N(\bk) \\ \dep(\bn_i)=k_i \text{ for all }i\in[r]}}\prod_{i=1}^rP_{k_i}^{(N)}(\bn_i).
\]
For example,
\[
\zeta_{<N}^{\flat}(2,3)=\sum_{0<n_1\leq n_2<n_3\leq n_4\leq n_5<N}\frac 1{(N-n_1)n_2(N-n_3)n_4n_5}.
\]
We prove that two kinds of multiple harmonic sums coincide.
\begin{theorem}[Discretization of the iterated integral expression of MZV]\label{thm:main}
For a positive integer $N$ and an index $\bk$, we have
\[
\zeta_{<N}^{}(\bk)=\zeta^{\flat}_{<N}(\bk).
\]
\end{theorem}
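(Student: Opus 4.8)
The plan is to prove Theorem~\ref{thm:main} by the \emph{connected sum method}: I would interpolate between $\zeta_{<N}^{}(\bk)$ and $\zeta^{\flat}_{<N}(\bk)$ through a family of ``half-plain, half-flat'' sums glued by a suitable connector, and then show that pushing one block across the plain/flat interface leaves the sum unchanged. Writing $\bk=(k_1,\dots,k_r)$, I process the blocks from right to left. For $0\le j\le r$ I would define a connected sum $Z_j$ in which the last $j$ blocks $k_{r-j+1},\dots,k_r$ are written in the flat shape (contributing the factors $P_{k_i}^{(N)}$ over weakly increasing chains, with the reflection $N-\,\cdot\,$ on the bottom variable of each block, exactly as in $S_N(\bk)$), while the first $r-j$ blocks are written in the plain shape $1/(n_1^{k_1}\cdots n_{r-j}^{k_{r-j}})$, the two halves being joined by a connector $C$ depending only on the two variables meeting at the interface: the largest plain variable and the smallest (hence reflected) flat variable.

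The connector is to be chosen so that at the two extremes it degenerates into the strict inequality forced at a block boundary, i.e. into the conditions encoded by $J(\bk)$. Granting this, the empty-flat case gives $Z_0=\zeta_{<N}^{}(\bk)$ (with the interface reducing to the top boundary $n_r<N$) and the empty-plain case gives $Z_r=\zeta^{\flat}_{<N}(\bk)$ (with the interface reducing to $0<m_1$). The theorem then follows by telescoping the chain $Z_0=Z_1=\dots=Z_r$, once I establish the transport relation $Z_j=Z_{j+1}$ for every $j$.

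The transport relation is a one-block statement: summing a freshly introduced flat block of size $s$ (a weakly increasing chain of $s$ variables carrying the reflection on its bottom variable) against the old connector reproduces exactly the plain factor $1/n^{\,s}$ of the block being moved, together with the connector relocated to the new interface. I expect $C$ to be of discrete-Beta type, a reciprocal binomial coefficient in the two interface variables, chosen so that each of the $s$ internal summations telescopes by a Chu--Vandermonde identity; the mixed $\le$/$<$ inequalities prescribed by $S_N(\bk)$ are precisely what make this iterated collapse possible.

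The hard part will be pinning down $C$ and verifying the transport relation, and in particular understanding how the reflected factor $1/(N-m_1)$ is manufactured: in the plain half no variable is reflected, so some step of the summation must turn a factor $1/m$ into $1/(N-m)$. As a sanity check I would first treat $\bk=(\{1\}^r)$, where every block has size one, both sides carry strict inequalities throughout, and the whole identity is the single global substitution $n_i\mapsto N-n_{r+1-i}$; the genuine content surfaces only for blocks of size $\ge 2$, where the reflection on the bottom variable must coexist with the internal $\le$ constraints, and reconciling these two features is exactly what the connector is forced to encode.
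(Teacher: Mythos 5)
Your plan reproduces the paper's proof architecture exactly: the paper also interpolates through a chain of connected sums $Z_N(k_1,\dots,k_{r-j}\mid k_{r-j+1},\dots,k_r)$, transports blocks one at a time from the right end of the plain part to the left end of the flat part, joins the two halves by a connector depending only on the two interface variables (the last plain variable and the reflected bottom variable of the first flat block), and recovers $\zeta_{<N}^{}(\bk)$ and $\zeta_{<N}^{\flat}(\bk)$ as the two degenerate ends of the chain. But the proposal stops exactly where the mathematics starts: the two items you defer --- ``pinning down $C$ and verifying the transport relation'' --- are the entire content of the proof, and your guesses about them point in the wrong direction. The connector that works is
\[
C_N(n,m)=\frac{\binom{m}{n}}{\binom{N}{n}},
\]
a ratio of two binomial coefficients depending essentially on the cutoff $N$ (indeed $C_N(0,m)=C_N(n,N)=1$ is what makes the two extreme cases collapse correctly); it is \emph{not} a reciprocal binomial coefficient in the two interface variables --- that is the connector $m!\,n!/(m+n)!$ of the earlier connected-sum proof of the duality in $\RR$ \cite{SekiYamamoto2019}, which serves a different identity. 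Moreover, transporting one block of size $k$ is not a single identity but two used in sequence (Lemma~\ref{lem:transport_connector}): identity \eqref{eq:transport1}, $\frac1n\,C_N(n,m)=\sum_{n\leq b\leq m}C_N(n,b)\,\frac1b$, is applied $k-1$ times to peel the plain factor $1/n^k$ down to $1/n$ while creating the weakly increasing interior variables of the new flat block; then identity \eqref{eq:transport2}, $\sum_{n<a\leq m}\frac1a\,C_N(a,m)=\sum_{n\leq b<m}C_N(n,b)\,\frac1{N-b}$, is applied once to sum away the old interface variable, and it is this summation step --- not any substitution --- that manufactures the reflected factor $1/(N-b)$ whose origin you correctly flagged as the crux. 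Both identities follow from one-line telescoping, e.g.\ $\frac1n\bigl(C_N(n,b)-C_N(n,b-1)\bigr)=C_N(n,b)\,\frac1b$, rather than from Chu--Vandermonde.

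So the verdict is: right scaffolding (it is precisely the paper's), and an accurate diagnosis of where the difficulty sits, but the difficulty itself is left unresolved. Without an explicit connector and a proof of the two transport identities, the chain $Z_0=Z_1=\cdots=Z_r$ is a hope rather than a theorem, so as it stands the proposal has a genuine gap and does not constitute a proof.
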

When $\bk$ consists of a sequence of $1$s, that is, for some $a$, $\bk=(\{1\}^a)$, this appears in Granville's paper (in his notation as $B(m,g-j)$).
Based on this family of identities, it is possible to obtain new proofs of the duality in $\RR$ and the duality in $\cA$.

\

\begin{center}
\begin{tikzpicture}[node distance=1cm and 2cm, auto]
   \tikzset{
       block/.style={rectangle, draw, text width=7em, text centered, minimum height=3em, fill=blue!15},
       line/.style={draw, -{Latex[length=2.5mm]}, shorten >=1pt, shorten <=1pt},
       wide block/.style={block, text width=10em, align=center, fill=blue!15} 
   }

   \node [block] (theorem) {Theorem~\ref{thm:main}};
   \node [wide block, right=of theorem] (zpz) {\( \zeta_{<N}(\bk)-\zeta_{<N}(\bk^{\dagger})\)\\ = (explicit terms)};
   \node [block, below=of zpz] (dualityA) {Duality in $\cA$};
   \node [block, right=of zpz] (dualityR) {Duality in $\RR$};

   \path [line] ($(theorem.east)+(0.1,0)$) -- node[above, midway, font=\scriptsize] {$N-n\mapsto n$} ($(zpz.west)-(0.1,0)$);
   \path [line] ($(zpz.east)+(0.1,0)$) -- node[above, midway, font=\small] {$N\to \infty$} ($(dualityR.west)-(0.1,0)$);
   \draw [line] ($(theorem.east)+(0.1,0)$) -- node[below, midway] {$\mod{p}\quad\quad\quad$} ($(dualityA.west)-(0.1,0)$);

\end{tikzpicture}
\end{center}

\

Therefore, this formula knows both dualities and this could be considered as providing an answer to Hirose's question.

In Section~\ref{sec:Rderiving}, we will derive the duality in $\RR$ from Theorem~\ref{thm:main} and in Section~\ref{sec:Aderiving}, we will derive the duality in $\cA$ from Theorem~\ref{thm:main}.
After checking these, in Section~\ref{sec:connector_proof}, we will prove Theorem~\ref{thm:main}.
In Section~\ref{sec:lifting}, we consider $\bp$-adic liftings of the duality in $\cA$.
\begin{remark}
In the first draft of this paper, we mentioned the similarity between the identity in Theorem~\ref{thm:main} and Kawashima's identity \cite[Proposition~3.2]{Kawashima.pre}, stating that revealing the relationship between them as future research.
However, after reading our first draft, Yamamoto has already provided an answer \cite{Yamamoto2024}.
\end{remark}
As a simple example of discretization other than MZVs, the following holds:
\[
\log2=\sum_{n=1}^{\infty}\frac{(-1)^{n-1}}{n}=\int_0^1\frac{\mathrm{d}t}{1+t} \quad \xrightarrow{\text{discretization}} \quad \sum_{n=1}^{2N-1}\frac{(-1)^{n-1}}{n}=\sum_{n=0}^{N-1}\frac{1}{N+n}.
\]
Is there any law of discretization that holds for a wider subclass of periods?
Here, ``periods'' refers to those in the sense of Kontsevich--Zagier \cite{KontsevichZagier2001}.
\section{Deriving the duality in $\RR$}\label{sec:Rderiving}
We prepare one lemma for use in error estimation.
\begin{lemma}\label{lem:limit}
Let $k$ be a positive integer and $a_1, b_1, \dots, a_k, b_k$ non-negative integers satisfying $a_1,b_k\geq 1$ and $a_i+b_i\geq 1$ for all $i\in[k]$.
If for at least one $i\in[k]$, $a_i+b_i\geq 2$ holds, then we have
\[
\lim_{N\to\infty}\sum_{0<n_1<n_2<\cdots<n_k<N}\prod_{i=1}^k\frac{1}{(N-n_i)^{a_i}n_i^{b_i}}=0.
\]
\end{lemma}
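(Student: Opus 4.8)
The plan is to show that the sum decays like $N^{-1}$ up to logarithmic factors, by slicing the simplex $0<n_1<\cdots<n_k<N$ according to how each $n_i$ compares with $N/2$. Since $n_1<\cdots<n_k$, there is a threshold index $t\in\{0,1,\dots,k\}$ with $n_1,\dots,n_t\le N/2<n_{t+1},\dots,n_k$, so the sum is a finite sum $\sum_{t=0}^{k}S_N^{(t)}$ of the corresponding pieces. On the $t$-th piece I would use, for $i\le t$, the bound $\tfrac{1}{(N-n_i)^{a_i}}\le (2/N)^{a_i}$ (since $N-n_i\ge N/2$), and for $i>t$ the bound $\tfrac{1}{n_i^{b_i}}\le (2/N)^{b_i}$ (since $n_i\ge N/2$). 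Pulling these powers of $N$ out front, and substituting $m_i=N-n_i$ in the upper variables (which reverses their order), each $S_N^{(t)}$ is bounded by $C\,N^{-(A_t+B_t)}$ times a product of two ordinary truncated multiple harmonic sums, where $A_t=\sum_{i\le t}a_i$ and $B_t=\sum_{i>t}b_i$: the first carries only the exponents $b_1,\dots,b_t$, the second only the exponents $a_k,a_{k-1},\dots,a_{t+1}$ (read in increasing order of $m$).

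The next ingredient is a uniform growth estimate for a truncated sum $H_{<M}(c_1,\dots,c_d)=\sum_{0<m_1<\cdots<m_d<M}m_1^{-c_1}\cdots m_d^{-c_d}$ with integers $c_i\ge 0$. Summing the variables from the innermost outward and using $\sum_{m<M}m^{q}=O(M^{q+1})$ for $q\ge 0$, $O(\log M)$ for $q=-1$, and $O(1)$ for $q\le -2$, one obtains $H_{<M}(c_1,\dots,c_d)\le C\,M^{g_d}(\log M)^{h_d}$, where the exponent $g_d\ge 0$ satisfies the reflected (Lindley) recursion $g_0=0$, $g_i=\max(0,\,g_{i-1}-c_i+1)$. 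Its closed form is $g_d=\max_{0\le l\le d}\bigl(\sum_{j=l+1}^{d}(1-c_j)\bigr)$, which is exactly what tracks the true power of $M$.

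Applying this with $M\asymp N$ to both factors, the net exponent of $N$ in the bound for $S_N^{(t)}$ is $\mathcal{E}_t=(-A_t+g^{(1)})+(-B_t+g^{(2)})$. Using the closed form together with $a_j+b_j\ge 1$, a short rearrangement gives
\[
-A_t+g^{(1)}=\max_{0\le l\le t}\Bigl(\sum_{j=l+1}^{t}(1-a_j-b_j)-\sum_{i=1}^{l}a_i\Bigr),
\]
in which every term is $\le 0$; the analogous identity shows $-B_t+g^{(2)}\le 0$. Hence $\mathcal{E}_t\le 0$ for all $t$, which already yields boundedness. To upgrade this to genuine decay I would show that equality $\mathcal{E}_t=0$ is impossible. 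The first piece can vanish only if $a_j+b_j=1$ for all $j\le t$: indeed $a_1\ge 1$ forces the maximizing cut to be $l=0$, and then nonpositivity of each $1-a_j-b_j$ forces all of them to be $0$. Symmetrically, $b_k\ge 1$ forces the cut in the second piece to the inner end, so $-B_t+g^{(2)}=0$ only if $a_j+b_j=1$ for all $j>t$. Together these would force $a_j+b_j=1$ for every $j$, contradicting the hypothesis that $a_j+b_j\ge 2$ for some $j$. Therefore $\mathcal{E}_t\le -1$ for each $t$, giving $S_N^{(t)}\le C\,N^{-1}(\log N)^{k}$ and, after summing the finitely many pieces, $S_N\le C\,N^{-1}(\log N)^{k}\to 0$.

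The hard part is precisely this last point. A naive ``count the exponent-zero variables, each as a factor of $N$'' estimate is too lossy, because the ordering $n_1<\cdots<n_k$ constrains the summation and prevents a free variable from contributing a full factor of $N$; the reflected-walk exponent $g_d$ is what records the correct growth. The strict inequality $\mathcal{E}_t<0$ then rests on the interplay of all three hypotheses: $a_i+b_i\ge 1$ for every $i$ makes each term in the two maxima nonpositive, $a_1\ge 1$ and $b_k\ge 1$ push the only equality cases to the extreme cuts, and $a_j+b_j\ge 2$ for some $j$ rules out the sole remaining equality configuration. It is this combination that I expect to require the most care to state cleanly.
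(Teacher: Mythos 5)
Your proof is correct, but it takes a genuinely different route from the paper's. The slicing of the simplex at $N/2$, the factorization of each slice into two truncated harmonic sums, the growth recursion $g_i=\max(0,\,g_{i-1}-c_i+1)$ with its closed form, and the equality analysis forcing $a_j+b_j=1$ for every $j$ all check out, and they localize the use of the hypotheses at the very end of the argument. The paper never slices the summation range. It instead exploits two elementary facts: every factor $\frac{1}{N-n_i}$, $\frac{1}{n_i}$ lies in $(0,1]$, so surplus factors may simply be discarded; and the change of variables $N-n_i\mapsto n_{k+1-i}$ turns a sum of this shape into another of the same shape with $(a_i,b_i)$ replaced by $(b_{k+1-i},a_{k+1-i})$. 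Using $a_1\geq 1$, $b_k\geq 1$, $a_i+b_i\geq 1$, and the one index with $a_j+b_j\geq 2$, the summand is bounded, possibly after one reflection, by $\frac{1}{(N-n_1)n_1}\prod_{i=2}^k\frac{1}{(N-n_i)^{c_i}n_i^{d_i}}$ with $c_i+d_i=1$; the partial fraction $\frac{1}{(N-n_1)n_1}=\frac{1}{N}\bigl(\frac{1}{n_1}+\frac{1}{N-n_1}\bigr)$ then extracts the decisive factor $\frac{1}{N}$ in one stroke, and what remains is crudely bounded by $\sum_{0<n_1,\dots,n_k<N}\frac{1}{n_1\cdots n_k}=O(\log^k N)$ after dropping the ordering. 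So in the paper the hypotheses act at the beginning, deciding which factors to keep and where the double factor $\frac{1}{(N-n)n}$ can be assembled, whereas in your proof they act at the end, ruling out the zero-exponent configuration. Both routes yield the same bound $O(N^{-1}\log^k N)$. Yours costs more bookkeeping but gives more: a general growth estimate for $H_{<M}(c_1,\dots,c_d)$ with arbitrary non-negative exponents and an exact decay exponent for each slice. The paper's buys brevity (the entire proof is a few lines), at the price of leaving the reduction step, stated as ``as necessary,'' for the reader to assemble.
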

\begin{proof}
Let $c_1, d_1, \dots, c_k, d_k$ be $0$ or $1$, where for each $i\in[k]$, we can choose $c_i$ and $d_i$ such that $c_i+d_i=1$, making possible the subsequent argument.
By applying the transformation $N-n_i\mapsto n_{k+1-i}$ as necessary, we can get the following estimate:
\[
\sum_{0<n_1<n_2<\cdots<n_k<N}\prod_{i=1}^k\frac{1}{(N-n_i)^{a_i}n_i^{b_i}}\leq\sum_{0<n_1<n_2<\cdots<n_k<N}\frac{1}{(N-n_1)n_1}\prod_{i=2}^k\frac{1}{(N-n_i)^{c_i}n_i^{d_i}}.
\]
By using a partial fraction decomposition
\[
\frac{1}{(N-n_1)n_1}=\frac{1}{N}\left(\frac{1}{n_1}+\frac{1}{N-n_1}\right)
\]
and an estimate
\[
\sum_{0<n_1<n_2<\cdots<n_k<N}\prod_{i=1}^k\frac{1}{(N-n_i)^{c_i}n_i^{d_i}}\leq \sum_{0<n_1,\dots,n_k<N}\frac{1}{n_1\cdots n_k}=O(\log^kN),
\]
we can conclude the statement.
\end{proof}
Before giving the general proof, let us look at the case of $\bk=(3)$ as a special case.
By Theorem~\ref{thm:main}, we have
\[
\zeta_{<N}^{}(3)=\zeta_{<N}^{\flat}(3)=\sum_{0<n_1\leq n_2\leq n_3<N}\frac{1}{(N-n_1)n_2n_3}
\]
and
\begin{align*}
\zeta_{<N}^{}(1,2)=\zeta_{<N}^{\flat}(1,2)&=\sum_{0<m_1 < m_2\leq m_3<N}\frac{1}{(N-m_1)(N-m_2)m_3}\\
&=\sum_{0<n_1\leq n_2 < n_3<N}\frac{1}{(N-n_1)n_2n_3}.
\end{align*}
Here we have set $n_1=N-m_3$, $n_2=N-m_2$, and $n_3=N-m_1$.
Hence we can compute
\begin{align*}
\zeta_{<N}^{}(3)-\zeta_{<N}^{}(1,2)&=\sum_{0<n_1<n_3<N}\frac{1}{(N-n_1)n_1n_3}+\sum_{0<n_1<n_2<N}\frac{1}{(N-n_1)n_2^2}\\
&\quad+\sum_{0<n_1<N}\frac{1}{(N-n_1)n_1^2}-\sum_{0<n_1<n_3<N}\frac{1}{(N-n_1)n_1n_3}.
\end{align*}
Since we can apply Lemma~\ref{lem:limit} for each sum in the right-hand side, we get $\zeta(3)=\zeta(1,2)$.
In this case, we also obtain an expression
\[
\zeta_{<N}^{}(3)-\zeta_{<N}^{}(1,2)=\sum_{0<n_1\leq n_2<N}\frac{1}{(N-n_1)n_2^2}
\]
and this coincides with \cite[Equation~(3.1)]{BorweinBradley2006}.
Therefore, the proof of the duality provided below can be considered as a generalization of the second proof of $\zeta(3)=\zeta(1,2)$ in Borwein--Bradley's beautiful article \cite{BorweinBradley2006}.

For positive integers $k$ and $N$, we set
\[
T_N(k)\coloneqq\{(n_1,\dots,n_k)\in\ZZ^k \mid 0<n_1<\cdots<n_k<N\}.
\]
\begin{proof}[Proof of Theorem~$\ref{thm:Rduality}$ using Theorem~$\ref{thm:main}$]
Let $\bk=(k_1,\dots,k_r)$ be an admissible index with $\wt(\bk)=k$.
By Theorem~\ref{thm:main}, we have
\[
\zeta_{<N}^{}(\bk)=\zeta_{<N}^{\flat}(\bk)=\sum_{\substack{(\bn_1,\dots,\bn_r)\in S_N(\bk) \\ \dep(\bn_i)=k_i \text{ for all }i\in[r]}}\prod_{i=1}^rP_{k_i}^{(N)}(\bn_i).
\]
After applying the same procedure to its dual index $\bk^{\dagger}$, by applying the transformation $N-n_i\mapsto n_{k+1-i}$ for all $i\in[k]$, we obtain
\[
\zeta_{<N}^{}(\bk^{\dagger})=\zeta_{<N}^{\flat}(\bk^{\dagger})=\sum_{\substack{(\bn_1,\dots,\bn_r)\in S'_N(\bk) \\ \dep(\bn_i)=k_i \text{ for all }i\in[r]}}\prod_{i=1}^rP_{k_i}^{(N)}(\bn_i)
\]
for some $S'_N(\bk)\subset \ZZ^k$.
Both $S_N(\bk)$ and $S'_N(\bk)$ include $T_N(k)$ as a subset and we can check that the right-hand side of
\begin{equation}\label{eq:discrepancy}
\zeta_{<N}^{}(\bk)-\zeta_{<N}^{}(\bk^{\dagger})=\left(\sum_{\substack{(\bn_1,\dots,\bn_r)\in S_N(\bk)\setminus T_N(k) \\ \dep(\bn_i)=k_i \text{ for all }i\in[r]}}-\sum_{\substack{(\bn_1,\dots,\bn_r)\in S'_N(\bk)\setminus T_N(k) \\ \dep(\bn_i)=k_i \text{ for all }i\in[r]}}\right)\prod_{i=1}^rP_{k_i}^{(N)}(\bn_i)
\end{equation}
is expressed as a finite sum of plus or minus one times sums of the form appearing in Lemma~\ref{lem:limit}.
Therefore, by Lemma~\ref{lem:limit}, we have
\[
\lim_{N\to\infty}\bigl(\zeta_{<N}^{}(\bk)-\zeta_{<N}^{}(\bk^{\dagger})\bigr)=0
\]
and this is the duality in $\RR$.
\end{proof}
This not only provides a new proof of the duality in $\RR$ by series manipulation but also, \eqref{eq:discrepancy} offers a method to concretely describe the discrepancy between $\zeta_{<N}^{}(\bk)$ and $\zeta_{<N}^{}(\bk^{\dagger})$.
\section{Deriving the duality in $\cA$}\label{sec:Aderiving}
It is known that Theorem~\ref{thm:Aduality} is equivalent to the following relations.
\begin{theorem}[{Hoffman~\cite[Theorem~4.7]{Hoffman2015}}]\label{thm:nonstarduality}
For an index $\bk$, we have
\[
\zeta_{\cA}^{}(\bk)=(-1)^{\dep(\bk)}\sum_{\bk\preceq\bl}\zeta_{\cA}^{}(\bl).
\]
\end{theorem}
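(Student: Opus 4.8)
The plan is to specialize the identity of Theorem~\ref{thm:main} to $N=p$ for each prime $p$ and reduce both sides modulo $p$, thereby reading off the asserted relation componentwise in $\cA$. First I would fix an index $\bk=(k_1,\dots,k_r)$ with $\wt(\bk)=k$ and, writing the block-start indices as $c_i\coloneqq k_1+\cdots+k_{i-1}+1\in J(\bk)$, observe that in each factor $P^{(p)}_{k_i}(\bn_i)$ the only occurrence of a ``$p-n$'' term is $\frac{1}{p-n_{c_i}}$. Since $0<n_{c_i}<p$ forces $p\nmid n_{c_i}$, we have $\frac{1}{p-n_{c_i}}\equiv-\frac{1}{n_{c_i}}\pmod p$, so each of the $r$ blocks contributes exactly one sign and
\[
\prod_{i=1}^r P^{(p)}_{k_i}(\bn_i)\equiv(-1)^r\frac{1}{n_1\cdots n_k}\pmod p.
\]
Summing over $S_p(\bk)$ and invoking Theorem~\ref{thm:main} then gives
\[
\zeta_{<p}^{}(\bk)\equiv(-1)^{\dep(\bk)}\sum_{(n_1,\dots,n_k)\in S_p(\bk)}\frac{1}{n_1\cdots n_k}\pmod p.
\]

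The heart of the argument is the purely combinatorial identity
\[
\sum_{(n_1,\dots,n_k)\in S_N(\bk)}\frac{1}{n_1\cdots n_k}=\sum_{\bk\preceq\bl}\zeta_{<N}^{}(\bl),
\]
valid for every positive integer $N$. To prove it I would partition $S_N(\bk)$ according to which of its weak inequalities are actually equalities. By the definition of $S_N(\bk)$, strict inequalities occur exactly at the block boundaries (the indices of $J(\bk)$), so a maximal run of equal coordinates always stays inside a single block. Collapsing each such run to its common value produces a strictly increasing tuple $0<m_1<\cdots<m_s<N$, and a run of length $t$ contributes a factor $1/m^t$; recording the run lengths yields a composition $\bl=(l_1,\dots,l_s)$ of $k$ in which the $i$-th block of $\bk$ is split into a composition of $k_i$. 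I would then check that these equality patterns correspond bijectively to the indices $\bl$ with $\bk\preceq\bl$: such an $\bl$ is exactly one for which the merging of commas recovering $\bk$ (unique because the partial sums of $\bl$ are strictly increasing) groups the parts of $\bl$ into $r$ consecutive runs summing to $k_1,\dots,k_r$, and the summation over the collapsed tuples for a fixed $\bl$ is precisely $\zeta_{<N}^{}(\bl)$.

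Combining the two displays at $N=p$ for every prime $p$ yields $\zeta_{\cA}^{}(\bk)=(-1)^{\dep(\bk)}\sum_{\bk\preceq\bl}\zeta_{\cA}^{}(\bl)$, as desired. I expect the main obstacle to be the bookkeeping in the combinatorial identity: one must verify carefully that runs of equal coordinates never cross a block boundary (so that the resulting composition genuinely refines $\bk$) and that the correspondence between equality patterns and refinements $\bl\succeq\bk$ is a genuine bijection rather than merely a surjection or an injection. Everything else---the reduction modulo $p$ and the accounting of the sign $(-1)^{\dep(\bk)}$---is routine once Theorem~\ref{thm:main} is in hand.
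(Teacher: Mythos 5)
Your proposal is correct and follows essentially the same route as the paper: specialize Theorem~\ref{thm:main} at $N=p$, apply $\frac{1}{p-n}\equiv-\frac{1}{n}\pmod{p}$ once per block (via the block-initial factor of each $P^{(p)}_{k_i}$) to extract the sign $(-1)^{\dep(\bk)}$, and identify $\sum_{(m_1,\dots,m_k)\in S_p(\bk)}\frac{1}{m_1\cdots m_k}$ with $\sum_{\bk\preceq\bl}\zeta_{<p}^{}(\bl)$. The only difference is one of detail: the paper asserts this last combinatorial identity without proof, whereas you justify it by the run-collapsing bijection between equality patterns in $S_p(\bk)$ and refinements $\bl\succeq\bk$, which is exactly the verification the paper leaves implicit.
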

We prove Theorem~\ref{thm:nonstarduality} directly from Theorem~\ref{thm:main}.
The proof essentially relies only on
\begin{equation}\label{eq:easiest}
\frac{1}{p-n}\equiv-\frac{1}{n}\pmod{p}
\end{equation}
for a prime number $p$ and a positive integer $n<p$.
\begin{proof}[Proof of Theorem~$\ref{thm:nonstarduality}$ using Theorem~$\ref{thm:main}$]
Let $p$ be a prime number.
By definition, for $\bk=(k_1,\dots, k_r)$ with $\wt(\bk)=k$, we compute
\[
\zeta_{<p}^{\flat}(\bk)=\sum_{\substack{(\bn_1,\dots,\bn_r)\in S_p(\bk) \\ \dep(\bn_i)=k_i \text{ for all }i\in[r]}}\prod_{i=1}^rP_{k_i}^{(p)}(\bn_i)\equiv(-1)^r\sum_{(m_1,\dots,m_k)\in S_p(\bk)}\frac{1}{m_1\cdots m_k}\pmod{p}.
\]
Since
\[
\sum_{(m_1,\dots,m_k)\in S_p(\bk)}\frac{1}{m_1\cdots m_k}=\sum_{\bk\preceq\bl}\zeta_{<p}^{}(\bl)
\]
holds, the proof completes.
\end{proof}
Hoffman's original proof of Theorem~\ref{thm:Aduality} was based on an identity (\cite[Theorem~4.2]{Hoffman2015})
\begin{equation}\label{eq:Hoffman_identity}
\sum_{1\leq n_1\leq \cdots \leq n_r\leq N}\frac{1}{n_1^{k_1}\cdots n_r^{k_r}}=\sum_{1\leq m_1\leq \cdots\leq m_s\leq N}(-1)^{m_s-1}\binom{N}{m_s}\frac{1}{m_1^{l_1}\cdots m_s^{l_s}}
\end{equation}
and a congruence
\[
(-1)^{m-1}\binom{p-1}{m}\equiv -1\pmod{p},
\]
where $(k_1,\dots,k_r)^{\vee}=(l_1,\dots,l_s)$, $p$ is a prime number, and $m$ is a positive integer less than $p$.
In the present study, obtaining a proof of Hoffman's duality from an identity without binomial coefficients is, at least for the authors, an unexpected phenomenon.
\section{Proof of Theorem~\ref{thm:main}}\label{sec:connector_proof}
We prove Theorem~\ref{thm:main} by using the connected sum method, which was introduced in \cite{SekiYamamoto2019}.
See \cite{Seki2020} for more details.
The flow of the proof is as follows: Find an appropriate ``connector'' and define the ``connected sum'' to connect $\zeta_{<N}^{}(\bk)$ and $\zeta_{<N}^{\flat}(\bk)$.
Then, components of the index within $\zeta_{<N}^{}(\bk)$ are progressively transported through this connected sum towards $\zeta_{<N}^{\flat}(\bk)$, ultimately resulting in $\zeta_{<N}^{}(\bk)$ completely transitioning into $\zeta_{<N}^{\flat}(\bk)$.

Fix a positive integer $N$.
For non-negative integers $n$ and $m$ satisfying $n\leq m\leq N$, we define a \emph{connector} $C_N(n,m)$ as
\[
C_N(n,m)\coloneqq\frac{\binom{m}n}{\binom{N}n}.
\]
For two indices $\bk=(k_1,\dots,k_r)$ and $\bn=(n_1,\dots,n_r)$, we define $\mathrm{st}(\bn)$, $\mathrm{end}(\bn)$, and $Q_{\bk}(\bn)$ as
\[
\mathrm{st}(\bn)\coloneqq n_1,\quad \mathrm{end}(\bn)\coloneqq n_r,\quad Q_{\bk}(\bn)\coloneqq\frac{1}{n_1^{k_1}\cdots n_r^{k_r}},
\]
respectively.
Let $\bk=(k_1,\dots,k_r)$ and $\bl=(l_1,\dots,l_s)$ be indices with $\wt(\bl)=l$.
We set $\widetilde{J}(\bl)$ and $\widetilde{S}_N(\bk;\bl)$ by
\[
\widetilde{J}(\bl)\coloneqq\{1,l_1+1,l_1+l_2+1,\dots,l_1+\cdots+l_{s-1}+1\}
\]
and
\[
\widetilde{S}_N(\bk;\bl)\coloneqq\left\{(n_1,\dots,n_r,m_1,\dots,m_l)\in\ZZ^{r+l} \ \middle| \ \begin{array}{l} 0<n_1<\cdots<n_r\leq m_1, \\ m_j<m_{j+1}  \quad \text{ if } j\in \widetilde{J}(\bl), \\ m_{j}\leq m_{j+1} \quad \text{ if } j\in [l]\setminus \widetilde{J}(\bl), \\ \text{where }m_{l+1}=N \end{array}\right\},
\]
respectively.
Then we define a \emph{connected sum} $Z_N(\bk\mid\bl)$ as
\[
Z_N(\bk \mid \bl)\coloneqq\sum_{\substack{(\bn,\bm_1,\dots,\bm_s)\in\widetilde{S}_N(\bk;\bl) \\ \dep(\bn)=r, \ \dep(\bm_j)=l_j \text{ for all } j\in[s]}}Q_{\bk}(\bn)\cdot C_N(\mathrm{end}(\bn),\mathrm{st}(\bm_1))\cdot\prod_{j=1}^sP_{l_j}^{(N)}(\bm_j).
\]
Furthermore, we set
\[
Z_N(\bk \mid)\coloneqq\zeta_{<(N+1)}^{}(\bk),\quad Z_N(\mid \bl)\coloneqq\zeta_{<(N+1)}^{\flat}(\bl).
\]
\begin{lemma}\label{lem:transport_connector}
For non-negative integers $n$ and $m$ satisfying $n\leq m\leq N$, we have
\begin{align}
\frac1n\cdot C_N(n,m)&=\sum_{n\leq b\leq m}C_N(n,b)\cdot\frac1b\quad(n>0),\label{eq:transport1}\\
\sum_{n<a\leq m}\frac1a\cdot C_N(a,m)&=\sum_{n\leq b<m}C_N(n,b)\cdot\frac1{N-b}\quad(n<m).\label{eq:transport2}
\end{align}
\end{lemma}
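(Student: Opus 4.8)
The plan is to unfold the connector into ordinary binomial coefficients, writing $C_N(n,m)=\binom{m}{n}/\binom{N}{n}$, and to reduce each of the two identities to two elementary facts: the absorption identity $\frac1b\binom{b}{n}=\frac1n\binom{b-1}{n-1}$ and the hockey-stick summation $\sum_{c=r}^{M}\binom{c}{r}=\binom{M+1}{r+1}$. A useful organizing observation, which also reflects the ``$N-n\mapsto n$'' symmetry driving the whole paper, is that the connector is invariant under reflection, $C_N(n,m)=C_N(N-m,N-n)$, as one sees from the rewriting $\binom{m}{n}/\binom{N}{n}=\binom{N-n}{m-n}/\binom{N}{m}$.

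For \eqref{eq:transport1} I would argue directly. Pulling out $1/\binom{N}{n}$, the right-hand side equals $\frac{1}{\binom{N}{n}}\sum_{b=n}^{m}\frac{1}{b}\binom{b}{n}$; applying absorption termwise turns this into $\frac{1}{n\binom{N}{n}}\sum_{b=n}^{m}\binom{b-1}{n-1}$, and the hockey-stick identity collapses the sum to $\binom{m}{n}$. This yields $\frac{1}{n}\cdot\frac{\binom{m}{n}}{\binom{N}{n}}=\frac1n C_N(n,m)$, which is the left-hand side. Equivalently one can run this as a one-line induction on $m$ whose single-step content is the absorption identity $m\binom{m-1}{n-1}=n\binom{m}{n}$.

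The identity \eqref{eq:transport2} is the genuinely harder one, and I would prove it by induction on $m\geq n+1$ with $n$ fixed. The base case $m=n+1$ reduces to $(n+1)\binom{N}{n+1}=(N-n)\binom{N}{n}$, a direct check. For the inductive step I would compare the forward differences in $m$ of the two sides. The right-hand side difference is the single term $C_N(n,m-1)/(N-m+1)$. For the left-hand side, absorption rewrites each $\frac1a\bigl(C_N(a,m)-C_N(a,m-1)\bigr)$ as $\frac1m C_N(a,m)$, and together with the new top term $\frac1m C_N(m,m)$ the whole difference telescopes into $\frac1m\sum_{a=n+1}^{m}C_N(a,m)$. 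Evaluating this partial sum of connectors is the crux: writing $C_N(a,m)=\binom{N-a}{m-a}/\binom{N}{m}$ and summing the column by hockey-stick gives $\sum_{a=n+1}^{m}C_N(a,m)=\binom{N-n}{m-n-1}/\binom{N}{m}$. It then remains to match $\frac{1}{m}\cdot\frac{\binom{N-n}{m-n-1}}{\binom{N}{m}}$ against $\frac{C_N(n,m-1)}{N-m+1}$, which is a one-line factorial identity.

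The main obstacle I anticipate is precisely this last step for \eqref{eq:transport2}: noticing that the awkward left-hand difference telescopes into a clean partial sum of connectors and then closing that sum in closed form. Once the reflection symmetry and the two binomial lemmas are in hand, everything else is bookkeeping; indeed, the reflection $C_N(n,m)=C_N(N-m,N-n)$ renders \eqref{eq:transport2} self-dual, so no separate ``mirror'' computation is needed.
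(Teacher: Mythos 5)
Your proof is correct, but it is organized differently from the paper's, most notably for \eqref{eq:transport2}. For \eqref{eq:transport1} the two arguments are essentially the same identity in different packaging: your absorption step $\frac1b\binom{b}{n}=\frac1n\binom{b-1}{n-1}$ is exactly the paper's local difference identity $\frac1n\bigl(C_N(n,b)-C_N(n,b-1)\bigr)=C_N(n,b)\cdot\frac1b$, and your hockey-stick summation is just its telescoped form. For \eqref{eq:transport2}, however, the paper does not induct on $m$ and never evaluates any sum of connectors in closed form: it exhibits a second two-variable ``exchange'' identity, $\frac1a\bigl(C_N(a,b+1)-C_N(a,b)\bigr)=\bigl(C_N(a-1,b)-C_N(a,b)\bigr)\cdot\frac1{N-b}$ for $n<a\leq b<m$, and obtains \eqref{eq:transport2} by summing it over the triangle of pairs $(a,b)$, so that both sides telescope simultaneously (in $b$ on the left, in $a$ on the right). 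Your route instead fixes $n$, inducts on $m$, uses the absorption identity to collapse the forward difference of the left side into $\frac1m\sum_{a=n+1}^m C_N(a,m)$, closes that sum by hockey-stick as $\binom{N-n}{m-n-1}/\binom{N}{m}$, and finishes with a factorial check --- all of which I verified to be correct, including the base case $(n+1)\binom{N}{n+1}=(N-n)\binom{N}{n}$ and the reflection symmetry $C_N(n,m)=C_N(N-m,N-n)$. The trade-off: the paper's proof is shorter and purely local, with no closed-form evaluations or factorial manipulations, and it stays entirely inside the ``connector difference'' formalism that powers the connected-sum method; yours is more computational but also more self-contained and discoverable, and your observation that the reflection symmetry makes \eqref{eq:transport2} self-dual is a genuine structural insight that the paper leaves implicit (it is the same $N-n\mapsto n$ symmetry that drives the derivation of the duality in $\RR$ in Section~\ref{sec:Rderiving}).
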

\begin{proof}
Since we can easily see that
\[
\frac1n\cdot\bigl(C_N(n,b)-C_N(n,b-1)\bigr)=C_N(n,b)\cdot\frac1b
\]
for $0<n<b\leq m$, and 
\[
\frac1a\cdot\bigl(C_N(a,b+1)-C_N(a,b)\bigr)=\bigl(C_N(a-1,b)-C_N(a,b)\bigr)\cdot\frac1{N-b}
\]
for $n<a\leq b< m$, we obtain the desired formulas.
\end{proof}
\begin{lemma}\label{lem:transport_connected_sum}
Let $k$ be a positive integer and $\bk$, $\bl$ indices.
Then we have
\begin{align*}
Z_N(\bk,k \mid \bl)&=Z_N(\bk\mid k,\bl), \\
Z_N(\bk,k\mid)&=Z_N(\bk\mid k), \\
Z_N(k\mid\bl)&=Z_N(\mid k,\bl).
\end{align*}
\end{lemma}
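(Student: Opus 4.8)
The plan is to reduce all three identities to a single ``one-variable transport'' formula and then prove that formula by iterating Lemma~\ref{lem:transport_connector}. The formula I would isolate is: for integers $0\le n\le M\le N$ and $k\ge 1$,
\[
\sum_{n<a\le M}\frac{C_N(a,M)}{a^{k}}
=\sum_{n\le b_1<b_2\le\cdots\le b_k\le M}\frac{C_N(n,b_1)}{(N-b_1)\,b_2\cdots b_k}
\]
(for $k=1$ this is exactly \eqref{eq:transport2}). To prove it I would start from the left-hand side, write $a^{-k}=a^{-(k-1)}\cdot a^{-1}$, and apply \eqref{eq:transport1} to the factor $a^{-1}C_N(a,M)$; each application peels off one power of $a$ while introducing a new summation variable $b$ and lowering the second slot of the connector. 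Applying \eqref{eq:transport1} a total of $k-1$ times yields the chain $a\le b_2\le\cdots\le b_k\le M$, the weight $1/(b_2\cdots b_k)$, one surviving factor $1/a$, and the connector $C_N(a,b_2)$. Summing over $a$ and then applying \eqref{eq:transport2} to the inner sum $\sum_{n<a\le b_2}a^{-1}C_N(a,b_2)$ introduces the last variable $b_1$ with $n\le b_1<b_2$ and the factor $1/(N-b_1)$, producing the right-hand side.

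The key observation is that the right-hand side is already in connected-sum form: the inequalities $n\le b_1<b_2\le\cdots\le b_k\le M$ are exactly the pattern that $\widetilde S_N$ imposes on a fresh block of length $k$ with start $b_1$ (strict just after the block start, and non-strict elsewhere), while the weight $1/((N-b_1)b_2\cdots b_k)$ is precisely $P^{(N)}_k(b_1,\dots,b_k)$. Hence for the generic identity $Z_N(\bk,k\mid\bl)=Z_N(\bk\mid k,\bl)$ (with $\bk$ and $\bl$ both nonempty, so that both sides are honest connected sums) I would hold the spectator weight $Q_{\bk}(n_1,\dots,n_r)$ and the unchanged $\bl$-part $\prod_j P^{(N)}_{l_j}(\bm_j)$ fixed, and apply the displayed identity with $n=\mathrm{end}(\bn)=n_r$, $a=n_{r+1}$, and $M=\mathrm{st}(\bm_1)$. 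Its left-hand side is then the summand of $Z_N(\bk,k\mid\bl)$ and its right-hand side is the summand of $Z_N(\bk\mid k,\bl)$, matching term by term over the common region.

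For the two boundary identities I would use the degenerate connector values $C_N(a,N)=1$ and $C_N(0,b)=1$. For $Z_N(\bk,k\mid)=\zeta^{}_{<(N+1)}(\bk,k)$ the harmonic sum runs over $n_{r+1}\le N$, so the displayed identity with $M=N$ and $C_N(a,N)=1$ turns $\sum_{n_r<n_{r+1}\le N}n_{r+1}^{-k}$ into the new $P^{(N)}_k$-block of $Z_N(\bk\mid k)$ with no further change. For $Z_N(k\mid\bl)=Z_N(\mid k,\bl)=\zeta^{\flat}_{<(N+1)}(k,\bl)$ I would take $n=0$, use $C_N(0,b_1)=1$ to transport the single block, and then reindex by shifting every block-initial variable up by one while fixing the interior variables. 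I expect this reindexing to be the main obstacle: it is what reconciles the parameter $N$ of the connected-sum machinery with the $N+1$ appearing in the definition of $\zeta^{\flat}_{<(N+1)}$, since $b\mapsto b+1$ sends $N-b$ to $(N+1)-(b+1)$ and converts the ``strict after a block start'' convention of $\widetilde S_N$ into the ``strict before a block start'' convention of $S_{N+1}$. Verifying that this map is a weight-preserving bijection between the two index regions is the step that will require the most care.
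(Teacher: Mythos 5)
Your proof is correct and is essentially the paper's own argument: the paper's (very terse) proof likewise applies \eqref{eq:transport1} $k-1$ times followed by a single application of \eqref{eq:transport2} and says the lemma ``follows from the definitions of connected sums.'' Your composite transport identity, the termwise matching with the spectator factors held fixed, and the block-start shift $b\mapsto b+1$ reconciling the ``strict after block start'' convention of $\widetilde{S}_N$ with the ``strict before block start'' convention of $S_{N+1}$ (together with $C_N(a,N)=C_N(0,b)=1$) are exactly the details the paper leaves implicit, and they all check out.
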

\begin{proof}
By applying \eqref{eq:transport1} $k-1$ times followed by a single application of \eqref{eq:transport2}, it follows from the definitions of connected sums.
\end{proof}
\begin{proof}[Proof of Theorem~$\ref{thm:main}$]
Write $\bk=(k_1,\dots,k_r)$.
By repeatedly using Lemma~\ref{lem:transport_connected_sum} $\dep(\bk)$ times, it is calculated as follows:
\begin{align*}
\zeta_{<(N+1)}^{}(\bk)&=Z_N(k_1,\dots,k_r\mid)\\
&=Z_N(k_1,\dots,k_{r-1}\mid k_r)\\
&=\cdots\\
&=Z_N(\mid k_1,\dots,k_r)=\zeta_{<(N+1)}^{\flat}(\bk).
\end{align*}
This completes the proof.
\end{proof}
\section{Duality in $\widehat{\cA}$}\label{sec:lifting}
As a lifting of the finite multiple zeta value $\zeta_{\cA}^{}(\bk)$, the \emph{$\bp$-adic finite multiple zeta value} $\zeta_{\widehat{\cA}}^{}(\bk)$ is defined as follows:
\[
\zeta_{\widehat{\cA}}^{}(\bk)\coloneqq ((\zeta_{<p}^{}(\bk)\bmod{p^n})_{p\in\PP})_{n\in\ZZ_{>0}}\in\widehat{\cA},
\]
where the \emph{$\bp$-adic number ring} $\widehat{\cA}$ is defined as
\[
\widehat{\cA}\coloneqq\varprojlim_{n}\left.\left(\prod_{p\in\PP} \ZZ/p^n\ZZ\right) \middle/ \left(\bigoplus_{p\in\PP} \ZZ/p^n\ZZ\right) \right. .
\]
This object was originally defined by Rosen~\cite{Rosen2015}, and up to the present, many liftings of relations among finite multiple zeta values to $\bp$-adic relations in $\widehat{\cA}$ have been obtained.
Note that there is a natural isomorphism $\widehat{\cA}/\bp\widehat{\cA}\simeq\cA$ and $\widehat{\cA}$ has the $\bp$-adic topology, where $\bp\in\widehat{\cA}$ is defined as $\bp\coloneqq((p\bmod{p^n})_{p\in\PP})_{n\in\ZZ_{>0}}$. 

Rosen proved a $\bp$-adic lifting of Theorem~\ref{thm:nonstarduality} and the second author proved a $\bp$-adic lifting of Theorem~\ref{thm:Aduality}:
\begin{theorem}[{Rosen~\cite[Theorem~4.5]{Rosen2015}}]
For an index $\bk$, we have
\[
\zeta_{\widehat{\cA}}^{}(\bk)+\sum_{i=0}^{\infty}(-1)^i\sum_{\bl}a_{\bl}\zeta_{\widehat{\cA}}^{}(\bl,1)\bp^{i+1}=(-1)^{\dep(\bk)}\sum_{\bk\preceq\bl}\zeta_{\widehat{\cA}}^{}(\bl),
\]
where $\bl$ runs over indices appearing in $\bk*(\{1\}^{i})=\sum a_{\bl}\bl$.
The product $*$ is the usual stuffle product as explained in \cite[\S2]{Kaneko2019}.
\end{theorem}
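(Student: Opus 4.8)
The plan is to run the argument of Section~\ref{sec:Aderiving} but to retain the full $\bp$-adic information instead of reducing modulo $p$. Fix a prime $p$ and specialize Theorem~\ref{thm:main} to $N=p$, so that $\zeta_{<p}^{}(\bk)=\zeta_{<p}^{\flat}(\bk)$ holds as an identity of rational numbers, each side defining an element of $\ZZ_p$ because every denominator is a product of integers in $[p-1]$ and hence prime to $p$. In the proof of Theorem~\ref{thm:nonstarduality} only the leading term of \eqref{eq:easiest} was used; here I would instead insert the exact geometric expansion
\[
\frac{1}{p-m}=-\sum_{j=0}^{\infty}\frac{p^{j}}{m^{j+1}}\qquad(0<m<p),
\]
which converges in $\ZZ_p$ since $m$ is a unit and $p/m$ is topologically nilpotent, and then organize the result by powers of $p$.

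Concretely, write $\bk=(k_1,\dots,k_r)$ and let $c_t\coloneqq k_1+\cdots+k_{t-1}+1\in J(\bk)$ mark the first component of the $t$-th block, so that in
\[
\zeta_{<p}^{\flat}(\bk)=\sum_{(m_1,\dots,m_k)\in S_p(\bk)}\ \prod_{t=1}^{r}\frac{1}{p-m_{c_t}}\prod_{j\notin\{c_1,\dots,c_r\}}\frac{1}{m_j}
\]
the only factors of the form $1/(p-\,\cdot\,)$ are the $r$ first-of-block factors. Substituting the expansion into each of them and expanding the product, I obtain in $\ZZ_p$
\[
\zeta_{<p}^{}(\bk)=(-1)^{r}\sum_{i=0}^{\infty}p^{\,i}\,\Sigma_i,\qquad \Sigma_i\coloneqq\sum_{\substack{j_1+\cdots+j_r=i\\ j_t\ge 0}}\ \sum_{(m)\in S_p(\bk)}\ \prod_{t=1}^{r}\frac{1}{m_{c_t}^{\,j_t+1}}\prod_{j\notin\{c_t\}}\frac{1}{m_j}
\]
(the $\Sigma_i$ depending on $p$). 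The term $i=0$ is exactly $\sum_{\bk\preceq\bl}\zeta_{<p}^{}(\bl)$, which already appeared in Section~\ref{sec:Aderiving}; transposing it recovers Theorem~\ref{thm:nonstarduality} as the reduction modulo $p$ and isolates the higher corrections $-(-1)^{r}\sum_{i\ge 1}p^{\,i}\Sigma_i$ as the genuinely new content. Assembling over all primes then upgrades scalar $p$ to $\bp$ and places the identity in $\widehat{\cA}$.

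It remains to identify $\Sigma_{i+1}$ with the stuffle data on the right-hand side. First I would collapse the weak inequalities of $S_p(\bk)$ into honest strict multiple harmonic sums, as in Section~\ref{sec:Aderiving}; raising the first-of-block exponent from $1$ to $j_t+1$ then has to be matched, summed over all compositions $j_1+\cdots+j_r=i+1$, with inserting and merging extra unit entries, which is precisely the quasi-shuffle (stuffle) product $\bk*(\{1\}^{i})$ recorded by the coefficients $a_{\bl}$. The single remaining unit entry, forced to the outermost index by the constraint $m_{c_t}<p$, is what produces the appended $1$ in $\zeta_{\widehat{\cA}}^{}(\bl,1)$; I expect the cleanest route to this is to peel off one factor via $\tfrac{1}{p-m}=-\tfrac1m+\tfrac{p}{m(p-m)}$ and re-sum, or equivalently to apply the substitution $m\mapsto p-m$ used in Section~\ref{sec:Rderiving}. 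After bookkeeping the signs $(-1)^{r}$ and $(-1)^{i}$, the corrections assemble into $\sum_{i=0}^{\infty}(-1)^{i}\sum_{\bl}a_{\bl}\zeta_{\widehat{\cA}}^{}(\bl,1)\bp^{\,i+1}$, and since $\bp^{\,i+1}\to 0$ in the $\bp$-adic topology the series converges in $\widehat{\cA}$, yielding the asserted identity.

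The hard part will be exactly this last combinatorial identification: proving that $\Sigma_{i+1}$ equals, up to an explicit sign, $\sum_{\bl\in\bk*(\{1\}^{i})}a_{\bl}\zeta_{<p}^{}(\bl,1)$ as an identity of rationals rather than merely modulo $p$. The subtlety is two-fold—recognizing the elementary-symmetric pattern $e_i\!\left(1/m_{c_1},1/m_{c_2},\dots\right)$ arising from the products of geometric series as the generating function of the stuffle product with $(\{1\}^{i})$, and correctly locating the trailing $1$ together with the exact sign $(-1)^{i}$. A generating-function argument in the quasi-shuffle algebra, or a direct telescoping in the spirit of Lemma~\ref{lem:transport_connector}, seems the most promising way to close this gap.
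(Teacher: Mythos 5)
Your expansion of $\zeta_{<p}^{\flat}(\bk)$ is carried out correctly, but it leads to a different theorem than the one stated. Collapsing the weak inequalities of $S_p(\bk)$ identifies your $\Sigma_i$ with $\sum_{\wt(\bl)=i}\sum_{\bl\oplus\bk\preceq\bm\preceq\bl\oslash\bk}\zeta_{<p}^{}(\bm)$, the sum running over $\bl\in\ZZ_{\geq 0}^{\dep(\bk)}$, and the resulting identity is exactly the paper's Theorem~\ref{thm:p-adic_duality} --- a \emph{different} $\bp$-adic lifting of Theorem~\ref{thm:nonstarduality}, which the paper states and proves in Section~\ref{sec:lifting} by precisely your argument (the geometric expansion of $1/(p-n)$ used in place of \eqref{eq:easiest}). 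The statement you were asked to prove, Rosen's theorem, is not proved in the paper at all: it is quoted from \cite{Rosen2015}, and the paper explicitly remarks that Rosen's proof rests on Hoffman's binomial identity \eqref{eq:Hoffman_identity}, not on Theorem~\ref{thm:main}.

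The step you yourself flag as ``the hard part'' --- identifying $\Sigma_{i+1}$, up to an explicit sign, with $\sum_{\bl}a_{\bl}\zeta_{<p}^{}(\bl,1)$ as an identity of rational numbers --- is not merely unproven; it is false. Take $\bk=(1)$ and $i=0$: your expansion gives $\Sigma_1=\zeta_{<p}^{}(2)$, while the corresponding Rosen term is $\zeta_{<p}^{}(1,1)$, and at $p=3$ these are $5/4$ and $1/2$ respectively; no choice of sign reconciles them. The two liftings have genuinely different coefficients at each power of $\bp$ and agree only after the full resummation in $\widehat{\cA}$, the discrepancy at each order being absorbed into higher orders (for instance, $\zeta_{\widehat{\cA}}^{}(1,1)-\zeta_{\widehat{\cA}}^{}(2)$ lies in $\bp\widehat{\cA}$ but need not vanish). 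Passing from the lifting your method produces to Rosen's form therefore requires supplementary relations among $\bp$-adic finite multiple zeta values across different powers of $\bp$ --- essentially the stuffle/binomial input through which \eqref{eq:Hoffman_identity} enters Rosen's original argument --- and no term-by-term combinatorial identification of the kind you propose (elementary symmetric functions, telescoping, or otherwise) can exist. In short, your plan, carried to completion, proves Theorem~\ref{thm:p-adic_duality} and reproves Theorem~\ref{thm:nonstarduality}, but it does not, and in its term-by-term form cannot, yield the stated theorem.
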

\begin{theorem}[{Seki~\cite[Theorem~1.3]{Seki2019}}]
For an index $\bk$, we have
\[
\sum_{i=0}^{\infty}\zeta^{\star}_{\widehat{\cA}}(\bk,\{1\}^i)\bp^i=-\sum_{i=0}^{\infty}\zeta^{\star}_{\widehat{\cA}}(\bk^{\vee},\{1\}^i)\bp^i,
\]
where $\zeta_{\widehat{\cA}}^{\star}(\bk)\coloneqq\sum_{\bl\preceq\bk}\zeta_{\widehat{\cA}}^{}(\bl)$.
\end{theorem}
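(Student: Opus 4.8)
The plan is to rerun the computation of Section~\ref{sec:Aderiving} at $N=p$, but to retain the \emph{entire} $\bp$-adic expansion of the connector-type factors instead of reducing them modulo $p$ as in \eqref{eq:easiest}. First I would apply Theorem~\ref{thm:main} with $N=p$ to obtain the exact rational identity $\zeta_{<p}(\bk)=\zeta^{\flat}_{<p}(\bk)$. In $\zeta^{\flat}_{<p}(\bk)$ there are exactly $\dep(\bk)$ factors of the shape $\frac{1}{p-m}$ with $0<m<p$, one for the leading variable of each block. Since each such $m$ is a unit modulo $p$, the geometric expansion
\[
\frac{1}{p-m}=-\sum_{j\geq 0}\frac{p^{\,j}}{m^{\,j+1}}\qquad(0<m<p)
\]
converges $p$-adically and refines the congruence \eqref{eq:easiest}. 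Multiplying these expansions out and collecting by the total power $\bp^{i}$ gives, modulo any fixed $p^{n}$, a sum in which only the finitely many terms of $p$-degree less than $n$ survive; hence the resulting series converges $\bp$-adically and yields a genuine identity in $\widehat{\cA}$.

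Next I would identify the coefficient of each $\bp^{i}$. The term $i=0$ reproduces the argument of Section~\ref{sec:Aderiving} verbatim and gives the non-star duality modulo $\bp$, while for $i\geq 1$ the inner sums are multiple harmonic sums over $S_p(\bk)$ with the first exponent of each block raised; resolving the non-strict inequalities defining $S_p(\bk)$ rewrites each of them as a $\ZZ$-linear combination of the $\zeta_{\widehat{\cA}}(\bl)$. This produces a $\bp$-adic lifting of Theorem~\ref{thm:nonstarduality} of the same type as Rosen's theorem above.

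Finally I would pass to the star form in the statement. Writing the non-strict sum
\[
\zeta^{\star}_{<p}(\bk,\{1\}^{i})=\sum_{0<n_1\leq\cdots\leq n_r\leq m_1\leq\cdots\leq m_i<p}\frac{1}{n_1^{k_1}\cdots n_r^{k_r}\,m_1\cdots m_i}
\]
and summing against $\bp^{i}$, the generating function of complete homogeneous symmetric functions collapses the appended $\{1\}^{i}$ into a single product $\prod_{n_r\leq m<p}(1-\bp/m)^{-1}$, and likewise for $\bk^{\vee}$. The target identity then amounts to the anti-symmetry of these two products under $\bk\leftrightarrow\bk^{\vee}$, which I would read off from the $\bp$-adic identity obtained in the previous step.

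The hard part will be this last matching: reconciling the two refinement structures — the within-block resolutions of $S_p(\bk)$ coming from the expansion, versus the coarsenings $\bl\preceq\bullet$ defining $\zeta^{\star}$ — through Hoffman duality, and doing so compatibly with the $p$-adic orders so that one obtains an equality of $\bp$-adic series rather than a congruence to a single power. By comparison, checking that every manipulation is well defined in $\widehat{\cA}$ (independence of the choice of representatives modulo $\bigoplus_{p}\ZZ/p^{n}\ZZ$ and $\bp$-adic convergence) is routine.
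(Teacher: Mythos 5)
Your proposal has a genuine gap, and it sits exactly where you flag ``the hard part.'' The first stages of your plan --- rerunning the Section~\ref{sec:Aderiving} computation at $N=p$ while keeping the full expansion $\frac{1}{p-m}=-\sum_{j\geq 0}p^j/m^{j+1}$ instead of the congruence \eqref{eq:easiest} --- are correct, but what they prove is the paper's Theorem~\ref{thm:p-adic_duality}, i.e.\ a $\bp$-adic lifting of the \emph{non-star} duality (Theorem~\ref{thm:nonstarduality}); this is precisely how the paper proves that theorem. The statement you were asked to prove is Seki's \emph{star-form} lifting, which is a different identity, and the paper does not derive it from Theorem~\ref{thm:main} at all: it is quoted from Seki's earlier work, and the paper explicitly notes that proof rests on Hoffman's binomial identity \eqref{eq:Hoffman_identity}. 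Your final step --- ``read off'' the star identity from the non-star $\bp$-adic one --- is asserted, not proved, and it is not a routine conversion: modulo $p$ the star and non-star dualities are equivalent by finite inclusion--exclusion, but $\bp$-adically each lifting carries its own infinite tower of correction terms in $\bp$, and matching two different liftings requires genuinely new relations among $\bp$-adic finite multiple zeta values.

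To see that your last step is really the whole theorem, note that your (correct) generating-function computation gives, componentwise for odd $p$,
\[
\sum_{i=0}^{\infty}p^i\zeta^{\star}_{<p}(\bk,\{1\}^i)=\sum_{0<n_1\leq\cdots\leq n_r<p}\frac{1}{n_1^{k_1}\cdots n_r^{k_r}}\prod_{n_r\leq m<p}\frac{m}{m-p}=\sum_{0<n_1\leq\cdots\leq n_r<p}\frac{(-1)^{n_r-1}\binom{p-1}{n_r-1}}{n_1^{k_1}\cdots n_r^{k_r}},
\]
since the product telescopes to a binomial coefficient. The required anti-symmetry of this binomial-weighted star sum under $\bk\leftrightarrow\bk^{\vee}$ is, up to reindexing, exactly the content of Hoffman's identity \eqref{eq:Hoffman_identity}; so completing your plan would amount to reproving that identity (essentially Seki's original argument), not to deducing the statement from the $\bp$-adic identity you obtained in the earlier steps. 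As written, the proposal proves a different theorem --- Theorem~\ref{thm:p-adic_duality}, by the same method the paper uses --- and leaves the target statement unproven; to fix it you would need to supply the combinatorial bridge between the two liftings explicitly, or find a star-adapted analogue of Theorem~\ref{thm:main}.
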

Both proofs based on Hoffman's identity \eqref{eq:Hoffman_identity}.
Here, we present one $\bp$-adic lifting of Theorem~\ref{thm:nonstarduality} based on Theorem~\ref{thm:main}.
For an index $\bk=(k_1,\dots,k_r)$ and a tuple of non-negative integers $\bl=(l_1,\dots,l_r)$, we use the following notation:
\[
\bl\oplus\bk\coloneqq(l_1+k_1,\dots,l_r+k_r),\quad\bl\oslash\bk\coloneqq(l_1+1,\{1\}^{k_1-1},\dots,l_r+1,\{1\}^{k_r-1}),
\]
and $\wt(\bl)\coloneqq l_1+\cdots+l_r$.
\begin{theorem}\label{thm:p-adic_duality}
For an index $\bk$, we have
\[
\zeta_{\widehat{\cA}}^{}(\bk)=(-1)^{\dep(\bk)}\sum_{i=0}^{\infty}\sum_{\substack{\bl\in\ZZ_{\geq 0}^{\dep(\bk)} \\ \wt(\bl)=i}}\sum_{\bl\oplus\bk\preceq\bm\preceq\bl\oslash\bk}\zeta_{\widehat{\cA}}^{}(\bm)\bp^i.
\]
\end{theorem}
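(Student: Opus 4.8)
The plan is to rerun the computation from the proof of Theorem~\ref{thm:nonstarduality}, but to replace the crude congruence \eqref{eq:easiest} by its full $\bp$-adic expansion and to bookkeep every power of $p$. Fix a prime $p$ and write $\bk=(k_1,\dots,k_r)$ with $\wt(\bk)=k$. By Theorem~\ref{thm:main} applied with $N=p$ we have $\zeta_{<p}^{}(\bk)=\zeta_{<p}^{\flat}(\bk)$, and in the defining sum for $\zeta_{<p}^{\flat}(\bk)$ the only factors of the shape $1/(p-n)$ are the $r$ factors coming from the first entry of each block through $P_{k_i}^{(p)}$. Since $0<n<p$ makes $n$ a $\bp$-adic unit, I would expand each such factor as the convergent geometric series
\[
\frac{1}{p-n}=-\frac1n\cdot\frac{1}{1-p/n}=-\sum_{j\geq 0}\frac{p^{j}}{n^{j+1}}.
\]

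Substituting this for each of the $r$ block-starting factors and collecting the geometric indices into a tuple $\bl=(l_1,\dots,l_r)\in\ZZ_{\geq 0}^{r}$, the summation over $S_p(\bk)$ is finite, so I may multiply out the finite product of $\bp$-adically convergent series and interchange it with that summation. Writing $e_i(\bl)$ for the exponent attached to the $i$-th variable---namely $l_t+1$ when $i$ is the starting position of the $t$-th block and $1$ otherwise---this produces
\[
\zeta_{<p}^{}(\bk)=(-1)^{r}\sum_{\bl\in\ZZ_{\geq 0}^{r}}p^{\wt(\bl)}\sum_{(m_1,\dots,m_k)\in S_p(\bk)}\frac{1}{\prod_{i=1}^{k} m_i^{e_i(\bl)}},
\]
and reading the exponents $e_1(\bl),\dots,e_k(\bl)$ from left to right yields exactly the index $\bl\oslash\bk$.

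Next I would identify the inner sum combinatorially, just as in the mod $p$ proof but now with the shifted exponents. In $S_p(\bk)$ the inequalities are strict at the block boundaries and non-strict inside each block, so summing $\prod_i m_i^{-e_i(\bl)}$ amounts to deciding, at each non-strict inequality, whether the two neighbouring variables coincide (their weights add) or stay distinct. Because the block boundaries remain strict, every admissible merge stays within a single block; the finest choice (no merges) returns $\bl\oslash\bk$, while merging each block completely returns $\bl\oplus\bk$. Matching comma positions shows that the indices $\bm$ arising this way are precisely those with $\bl\oplus\bk\preceq\bm\preceq\bl\oslash\bk$, each occurring once, whence
\[
\sum_{(m_1,\dots,m_k)\in S_p(\bk)}\frac{1}{\prod_{i=1}^{k} m_i^{e_i(\bl)}}=\sum_{\bl\oplus\bk\preceq\bm\preceq\bl\oslash\bk}\zeta_{<p}^{}(\bm).
\]
This is the honest refinement of the mod $p$ identity $\sum_{S_p(\bk)}\prod_i m_i^{-1}=\sum_{\bk\preceq\bm}\zeta_{<p}^{}(\bm)$: at $\bl=0$ one has $\bl\oplus\bk=\bk$ and $\bl\oslash\bk=(\{1\}^{k})$, whose upper constraint is automatic.

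Finally I would regroup the outer sum by $i=\wt(\bl)$ and pass to $\widehat{\cA}$. For each fixed modulus $p^{n}$ the terms with $\wt(\bl)\geq n$ vanish, so for every prime $p$ and every $n$ only finitely many tuples contribute and the identity in $\prod_{p}\ZZ/p^{n}\ZZ$ is a genuine finite statement; taking the class in $\widehat{\cA}$, identifying the per-prime factor $p^{i}$ with $\bp^{i}$, and using $\dep(\bk)=r$ gives the asserted formula. I expect the one genuinely delicate step to be the combinatorial bookkeeping of the third paragraph---verifying that raising the block-start exponents by $\bl$ converts the mod $p$ range $\bk\preceq\bm$ into the two-sided range $\bl\oplus\bk\preceq\bm\preceq\bl\oslash\bk$---together with the routine but necessary check that multiplying out the finite product of geometric series and passing to the $\bp$-adic limit are legitimate.
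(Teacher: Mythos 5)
Your proposal is correct and is exactly the paper's argument: the paper's proof consists of the single remark that one should rerun the Section~\ref{sec:Aderiving} computation with the expansion $\frac{1}{p-n}=-\sum_{l=0}^{\infty}\frac{p^l}{n^{l+1}}$ in place of the congruence \eqref{eq:easiest}. Your write-up simply supplies the bookkeeping (the identification of the exponent pattern with $\bl\oslash\bk$, the within-block merging giving the range $\bl\oplus\bk\preceq\bm\preceq\bl\oslash\bk$, and the passage to $\widehat{\cA}$) that the paper leaves implicit.
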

\begin{proof}
In the proof in Section~\ref{sec:Aderiving}, one may use a $p$-adic expansion
\[
\frac{1}{p-n}=-\sum_{l=0}^{\infty}\frac{p^l}{n^{l+1}}
\]
instead of \eqref{eq:easiest}.
\end{proof}
\subsection*{Acknowledgements}
The authors would like to express their gratitude to Professor Hidekazu Furusho and Professor Masanobu Kaneko for their valuable comments on the history of Theorem~\ref{thm:integral}.

\end{document}